\newtheorem{thm}{Theorem}[section]
\newtheorem{lem}[thm]{Lemma}
\newtheorem{prop}[thm]{Proposition}
\newtheorem{cor}[thm]{Corollary}
\newcommand{\frag}[3][f]{
\ifthenelse{\equal{#1}{f}}%                     %footnotesize
{% THEN
    \psfrag{#2}{\footnotesize#3}
}{% ELSE
    \ifthenelse{\equal{#1}{s}}%                 %small
    {% THEN
        \psfrag{#2}{\small#3}
    }{% ELSE
        \ifthenelse{\equal{#1}{l}}%             %large
        {% THEN
            \psfrag{#2}{\large#3}
        }{% ELSE
            \ifthenelse{\equal{#1}{ss}}%        %scriptsize
            {% THEN
                \psfrag{#2}{\scriptsize#3}
            }{% ELSE
                \ifthenelse{\equal{#1}{n}}%     %normalsize
                {% THEN
                    \psfrag{#2}{#3}
                }{% ELSE
                }% ENDIF
            }% ENDIF
        }% ENDIF
    }% ENDIF
}% ENDIF
}
\definecolor{grigio}{gray}{.30}
\newcommand{\s}{\ \ \ \ }
\renewcommand{\emptyset}{\varnothing}
\newcommand{\gbra}[1]{\left\{ #1 \right\}}
\newcommand{\Z}{\mathds{Z}}
\newcommand{\Q}{\mathds{Q}}
\newcommand{\catE}{\mathscr{E}}
\newcommand{\Ml}{\textrm{ML}}
\title{Complementary legs and Rational balls}
\author{Ana G. Lecuona}
\address{Institute of Mathematics, University of Aix-Marseille, France}
\email{ana.lecuona@univ-amu.fr}
\date{}
\begin{document}

\maketitle
\begin{abstract}
In this note we study the Seifert rational homology spheres with two complementary legs, i.e.\ with a pair of invariants whose fractions add up to one. We give a complete classification of the Seifert manifolds with 3 exceptional fibers and two complementary legs which bound rational homology balls. The result translates in a statement on the sliceness of some Montesinos knots.  
\end{abstract}

\section{introduction}

Seifert manifolds are a particularly well understood set of 3--manifolds. They are oriented, closed 3--manifolds admitting a fixed point free action of $S^{1}$ and they are classified by their \emph{Seifert invariants} \cite{orlik}. They include all lens spaces. They were first introduced in \cite{seifert} and constitute one of the building blocks in the JSJ decomposition. They have been studied from the most various perspectives and are partially or completely classified from many different points of view: Stein fillability, symplectic fillability, tightness etc.  

A relevant and difficult question one can ask about a 3--manifold is whether or not it bounds a rational homology ball \cite{kirby}. In the case of Seifert manifolds there are some partial answers spread out in the literature \cite{cassonharer,GJ,pretzelYo,AG} and there is a complete classification for the lens space case \cite{lisca}. 

Seifert manifolds can be realized as the double cover of $S^{3}$ branched over Montesinos links \cite{montesinos}. If a Seifert manifold does not bound a rational homology ball, then it is a well known fact \cite[Lemma~17.2]{Ka} that the corresponding Montesinos link does not bound an embedded surface in $B^{4}$ with Euler characteristic equal to one. In the case of the link being a knot, this tells us that the knot is not slice. The famous slice--ribbon conjecture states that every slice knot bounds an immersed disk in $S^{3}$ with only ribbon singularities. A surface singularity is \emph{ribbon} if it is the identification of two segments, one contained in the interior of the surface and the other joining two different boundary points. 

Among the Seifert fibered rational homology spheres $Y$ with Seifert invariants $Y=Y(b;(\alpha_{1},\beta_{1}),\dots,(\alpha_{n},\beta_{n} ))$, $b\in\Z$, $\alpha_{i}>\beta_{i}>0$, we will be interested in this note in those having a \emph{pair of complementary legs}, that is, a pair of invariants verifying $\tfrac {\beta_{i}}{\alpha_{i}}+\tfrac {\beta_{j}}{\alpha_{j}}=1$. The case $n\leq3$ is of particular interest since for $n\leq3$ there is a one to one correspondence between the set of Seifert manifolds and the set of the corresponding Montesinos links and our results will have an interpretation also in terms of those. Our main result is the following.

\begin{thm}\label{main}
The Seifert fibered manifold $Y=Y(b;(\alpha_{1},\beta_{1}),(\alpha_{2},\beta_{2}),(\alpha_{3},\beta_{3}))$ with $\tfrac {\beta_{2}}{\alpha_{2}}+\tfrac {\beta_{3}}{\alpha_{3}}=1$ is the boundary of a rational homology ball if and only if $Y$ belongs to the list $\mathscr L$.
\end{thm}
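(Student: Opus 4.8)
The plan is to translate the question into the language of plumbed $4$--manifolds and definite lattices. First I would record the arithmetic consequence of the hypothesis: writing the fractions in lowest terms, $\tfrac{\beta_2}{\alpha_2}+\tfrac{\beta_3}{\alpha_3}=1$ together with $\gcd(\alpha_i,\beta_i)=1$ forces $\alpha_2=\alpha_3=:\alpha$ and $\beta_3=\alpha-\beta_2$. Thus $Y$ is presented by a star--shaped plumbing graph with a central vertex (carrying a weight determined by $b$) and three linear legs, of slopes $\tfrac{\alpha_1}{\beta_1}$, $\tfrac{\alpha}{\beta_2}$ and $\tfrac{\alpha}{\alpha-\beta_2}$, read off from the negative continued fraction expansions. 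The two complementary legs are then Riemenschneider dual to one another, and this rigid relationship between their continued fractions is what I expect to drive the entire classification.

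For the ``only if'' direction I would set up the standard definiteness obstruction. After possibly reversing the orientation of $Y$ (which does not affect bounding a rational homology ball), $Y$ bounds the canonical negative definite plumbing $X$ attached to the graph above, whose intersection lattice $(\Lambda,Q_X)$ is nondegenerate with $\abs{\det Q_X}=\abs{H_1(Y;\Z)}$. If $Y=\partial W$ for a rational homology ball $W$, then $Z:=X\cup_Y\overline{W}$ is a closed negative definite $4$--manifold with $b_2(Z)=b_2(X)=:N$, so by Donaldson's diagonalization theorem its form is $\langle-1\rangle^{N}$; consequently $(\Lambda,Q_X)$ embeds as a finite--index sublattice of the diagonal lattice $\big(\Z^{N},\langle-1\rangle\big)$, the index squared being $\abs{H_1(Y;\Z)}$ (in particular a perfect square). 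The task then becomes purely combinatorial: classify the embeddings of this star--shaped lattice into $\Z^{N}$. I would follow the strategy pioneered by Lisca for linear lattices, analyzing how the basis vector of each leg distributes its $\pm1$ entries among the standard coordinates, with the central vertex tying the three legs together through the orthogonality relations of $Q_X$. The complementary--leg duality should force the contributions of the second and third legs to be paired, collapsing the a priori enormous case analysis to a manageable one; reading off the surviving arithmetic relations between $b$, $\alpha_1$, $\beta_1$ and $\alpha$ then produces exactly the members of $\mathscr L$.

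For the ``if'' direction I would exhibit a rational homology ball bounded by each $Y\in\mathscr L$ directly. Since $Y$ is the double cover of $S^3$ branched along the associated Montesinos link, it suffices to show that link is ribbon: a ribbon knot is slice, and the double cover of $B^4$ branched over a slice disk is a rational homology ball. Here the complementary slopes $\tfrac{\beta_2}{\alpha}$ and $\tfrac{\alpha-\beta_2}{\alpha}$ provide a natural symmetry of the two corresponding rational tangles, from which a ribbon move can be read off. Alternatively, I would perform Kirby calculus on $X$, using the dual continued fractions of the complementary legs to cancel handles in pairs and reduce $X$ to a $4$--manifold visibly built from $1$-- and $2$--handles with the rational homology of a ball.

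The main obstacle will be the embedding analysis in the necessity direction. Unlike the linear (lens space) case, the branching at the central vertex couples all three legs, and one must simultaneously control the $\pm1$ patterns on each of them subject to the constraints imposed by $Q_X$; keeping the case distinctions finite, and verifying that no admissible embedding is overlooked so that $\mathscr L$ is neither too large nor too small, is the delicate point. I expect the complementary--leg hypothesis to be essential precisely here, cutting down the effective freedom of the embedding and rendering the star--shaped problem solvable in closed form.
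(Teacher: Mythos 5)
Your outline diverges from the paper at its central point, and the divergence is fatal to the ``only if'' direction as you have planned it. The paper does not classify embeddings of the star-shaped lattice into $(\Z^{N},-\mathrm{Id})$. Its key step (Proposition~\ref{p:cl+ball}) is instead handle-theoretic: attaching a single $2$-handle to the plumbing $M_{P}$ cancels the two complementary legs and produces a rational homology cobordism from $Y_{P}$ to the lens space $Y_{P'}$ whose linear graph is obtained from $L_{1}\cup\{v_{0}\}$ by lowering the central weight by one. Hence $Y_{P}$ bounds a rational homology ball if and only if the lens space $Y_{P'}$ does, and \emph{both} directions of Theorem~\ref{main} then follow from Lisca's classification \cite{lisca}; the lattice analysis in Lemma~\ref{l:gs+cl} is only used to put the answer in the explicit form of the list $\mathscr{L}$, and it starts from the embedding of the reduced linear graph $P'$ supplied by the cobordism, not from a free-standing analysis of the three-legged lattice.

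The concrete gap in your plan is that for this family the Donaldson embedding obstruction is strictly weaker than bounding a rational homology ball, so classifying embeddings of the star-shaped lattice cannot ``produce exactly the members of $\mathscr{L}$.'' Section~\ref{s:remarks} exhibits a three-legged canonical graph $P$ with two complementary legs such that $P\subseteq\Z^{|P|}$ while $Y_{P}$ does \emph{not} bound a rational homology ball, because the associated lens space $L(36,19)$, with string $(2,10,2)$, is not in Lisca's list. Thus your ``only if'' direction would output a list strictly larger than $\mathscr{L}$, and your ``if'' direction (ribbon constructions) could never close the gap, since no rational ball exists for the surplus manifolds: the two halves of your argument would not meet. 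This contrasts with the lens space case \cite{lisca} and the families in \cite{MontesinosYo}, where embedding is equivalent to bounding; the point of this paper is precisely that here one must upgrade Donaldson by first passing through the cobordism to a lens space. Two further cautions: the paper also notes that in this family embeddings may require coefficients of absolute value at least $2$, so your intended analysis of ``$\pm1$ patterns'' is unjustified even as an enumeration of embeddings; and your ribbon-move sketch for sufficiency only reduces the Montesinos link to a $2$-bridge link plus an unknot, after which you still need Lisca's theorem to conclude that the $2$-bridge link bounds a ribbon surface of Euler characteristic one --- so even the ``if'' direction ultimately runs through the same reduction to lens spaces that your proposal omits.
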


The list $\mathscr L$ in the theorem is completely explicit and can be found in Section~\ref{thelist}. Given any set of Seifert spaces with at most 3 exceptional fibers we will simply say that a Montesinos link belongs to this set if the double cover of $S^{3}$ branched over this link belongs to it. In terms of Montesinos links we have the following result.
\begin{thm}\label{ribbon}
Each Montesinos link in the set $\mathscr L$ is the boundary of a ribbon surface $\Sigma$ such that $\chi(\Sigma)=1$. 
\end{thm}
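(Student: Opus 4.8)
The plan is to prove Theorem~\ref{ribbon} by an explicit construction, carried out family by family over the (finitely many types of) entries of the list $\mathscr L$ described in Section~\ref{thelist}. Recall that a ribbon surface can always be put in band form: it is obtained from a collection of $d$ disjoint, unknotted, unlinked disks in $S^{3}$ by attaching $c$ embedded bands, so that $\chi(\Sigma)=d-c$. Thus requiring $\chi(\Sigma)=1$ amounts to using exactly one more disk than band, i.e.\ $d=c+1$. The first step is therefore to translate each Seifert datum $Y(b;(\alpha_{1},\beta_{1}),(\alpha_{2},\beta_{2}),(\alpha_{3},\beta_{3}))$ in $\mathscr L$ into the corresponding Montesinos link $M$, drawn in the standard way as three rational tangles of slopes $\tfrac{\beta_{i}}{\alpha_{i}}$ inserted cyclically into a Montesinos (necklace) pattern together with $b$ integer twists; here I would use the continued fraction expansions of the $\tfrac{\alpha_{i}}{\beta_{i}}$ to write each tangle concretely.

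The geometric heart of the argument is the hypothesis $\tfrac{\beta_{2}}{\alpha_{2}}+\tfrac{\beta_{3}}{\alpha_{3}}=1$ on the complementary legs. I would exploit it as follows: a rational tangle of slope $t$ and a rational tangle of the complementary slope $1-t$, when placed side by side in the Montesinos pattern, admit a canonical fusion band $b_{0}$ — a single ribbon band whose attachment cancels the two tangles, replacing that portion of the diagram by a trivial ($0$ or $\infty$) tangle. Concretely, the complementarity is what makes the closure of the two tangles together an unknotted piece, so $b_{0}$ can be chosen to co-bound, with a trivial disk, an annulus realizing the fusion inside $S^{3}$; this is the move that supplies most of the bands while keeping the surface ribbon rather than merely slice.

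After performing $b_{0}$, the diagram simplifies to a much shorter link — in fact two-bridge, or a connected sum of a knot with its mirror-reverse — determined only by $b$ and the first leg $\tfrac{\beta_{1}}{\alpha_{1}}$. For this reduced link I would either recognise it directly as bounding an obvious ribbon surface with $\chi=1$ (for the base cases of $\mathscr L$, typically unknots, unlinks, or symmetric unions), or induct on the length of the continued fraction of $\tfrac{\alpha_{1}}{\beta_{1}}$, peeling off one band at a time in the spirit of Lisca's construction for lens spaces \cite{lisca}. At each stage the bookkeeping $d=c+1$ must be maintained, so that the assembled surface — the trivial disks of the final unlink together with all the fusion bands — has Euler characteristic exactly one.

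The main obstacle I anticipate is uniformity and verification across the several infinite families in $\mathscr L$: for each family one must (i) exhibit the bands completely explicitly, (ii) check by an isotopy/tangle calculation that the boundary of the resulting surface is precisely the Montesinos link $M$, and not merely some link sharing the same double branched cover, and (iii) confirm that every band lies in $S^{3}$, so that $\Sigma$ is genuinely ribbon and not just smoothly slice. Steps (i)--(ii) are the delicate ones, since the twisting parameter $b$ and the first leg can interact with the fusion band $b_{0}$ and alter the reduced link; controlling this interaction, while keeping $\chi(\Sigma)=1$ throughout the reductions, is where the real work lies.
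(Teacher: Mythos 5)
Your proposal takes essentially the same approach as the paper: Theorem~\ref{ribbon} is proved there via Proposition~\ref{p:famC}, whose proof uses exactly your fusion band --- a single ribbon move cancelling the two complementary tangles, cited from \cite{MontesinosYo} --- followed by an appeal to Lisca's theorem \cite{lisca} that the reduced $2$--bridge link (the one associated to the lens space $L_{P'}$, determined by $b$ and the first leg) bounds a ribbon surface with $\chi=1$. One detail to repair when carrying out your tangle calculation: the band move turns $\Ml_P$ into the two--bridge link \emph{together with a split unknot}, and the disk bounded by that unknot is exactly what makes the count close up, $\chi(\Sigma)=1+1-1=1$, whereas with your reduced link (the two--bridge link alone) the same bookkeeping would give $\chi(\Sigma)=0$.
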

An immediate corollary of this theorem is that the slice--ribbon conjecture is true inside the family of Montesinos knots whose double covers are Seifert fibered spaces with 3 exceptional fibers and two complementary legs.

The proof of Theorem~\ref{main} relies very heavily on the analysis previously done in \cite{lisca,MontesinosYo}. In this note we first show that a Seifert space $Y$ with complementary legs is rational homology cobordant to a lens space $L$. This allows us to study a lattice embedding problem in a very efficient way, which yields a relationship between the Seifert invariants of $Y$ and of $L$. The classification of lens spaces bounding rational homology balls in \cite{lisca} allows us to make the list $\mathscr L$ explicit.

\subsection{Organization of the paper}
In Section~\ref{s:notation} we introduce the strict minimum regarding Seifert manifolds, the language of lattice embeddings and Montesinos links. In Section~\ref{s:proofs} we carry out the proofs of the theorems in the introduction and establish the list $\mathscr L$. Finally in Seciton~\ref{s:remarks} we collect some noteworthy technical remarks about the lattice embedding problem.

\section{Notation and Conventions}\label{s:notation}

\subsection{Graphs and Seifert spaces}\label{s:graphs}
It is well known \cite{NR} that, at least with one of the two orientations, a Seifert fibered rational homology sphere defined by the invariants $Y=Y(b;(\alpha_{1},\beta_{1}),(\alpha_{2},\beta_{2}),(\alpha_{3},\beta_{3}))$ is the boundary of a 4--manifold obtained by plumbing according to the graph:
\[
  \begin{tikzpicture}[xscale=1.5,yscale=-0.5]
    \node (A0_4) at (4, 0) {$-a_{1,2}$};
    \node (A0_6) at (6, 0) {$-a_{n_{2},2}$};
    \node (A1_4) at (4, 1) {$\bullet$};
    \node (A1_5) at (5, 1) {$\dots$};
    \node (A1_6) at (6, 1) {$\bullet$};
    \node (A2_0) at (0, 2) {$-a_{n_{1},1}$};
    \node (A2_2) at (2, 2) {$-a_{1,1}$};
    \node (A2_3) at (3, 2) {$-a_{0}$};
    \node (A3_0) at (0, 3) {$\bullet$};
    \node (A3_1) at (1, 3) {$\dots$};
    \node (A3_2) at (2, 3) {$\bullet$};
    \node (A3_3) at (3, 3) {$\bullet$};
    \node (A4_4) at (4, 4) {$-a_{1,3}$};
    \node (A4_6) at (6, 4) {$-a_{n_{3},3}$};
    \node (A5_4) at (4, 5) {$\bullet$};
    \node (A5_5) at (5, 5) {$\dots$};
    \node (A5_6) at (6, 5) {$\bullet$};
    \path (A1_4) edge [-] node [auto] {$\scriptstyle{}$} (A1_5);
    \path (A3_0) edge [-] node [auto] {$\scriptstyle{}$} (A3_1);
    \path (A1_5) edge [-] node [auto] {$\scriptstyle{}$} (A1_6);
    \path (A3_3) edge [-] node [auto] {$\scriptstyle{}$} (A1_4);
    \path (A5_4) edge [-] node [auto] {$\scriptstyle{}$} (A5_5);
    \path (A3_2) edge [-] node [auto] {$\scriptstyle{}$} (A3_3);
    \path (A3_3) edge [-] node [auto] {$\scriptstyle{}$} (A5_4);
    \path (A3_1) edge [-] node [auto] {$\scriptstyle{}$} (A3_2);
    \path (A5_5) edge [-] node [auto] {$\scriptstyle{}$} (A5_6);
  \end{tikzpicture}
\]
where 
$$ \frac{\alpha_i}{\beta_i}=[a_{1,i},...,a_{n_{i},i}]:=a_{1,i}-\frac{1}{\displaystyle a_{2,i} - \frac{ \bigl. 1}{\displaystyle \ddots\ _{\displaystyle {a_{n_{i}-1,i}} -\frac {\bigl. 1}{a_{n_i,i}}}} },\ a_{j,i}\geq 2\ \mathrm{and}\ a_{0}=-b\geq 1.$$
This graph is unique and will be called the \emph{canonical} graph associated to $Y$. We will use the expression \emph{three legged canonical graph} to denote any such graph. 

If we are given a weighed graph $P$, like for example the one above, we will denote by $M_{P}$ the 4--manifold obtained by plumbing according to $P$ and by $Y_{P}$ its boundary. The incidence matrix of the graph, which represents the intersection pairing on $H_{2}(M_{P};\Z)$ with respect to the natural basis, will be denoted by $Q_{P}$. The number of vertices in $P$ coincides with $b_{2}(M_{P})$ and we will call $(\Z^{b_{2}(M_{P})},Q_{P})$ the intersection lattice associated to $P$. The vertices of $P$, which from now on will be identified with their images in $\Z^n$ and will also be called vectors, are indexed by elements of the set $J:=\{(s,\alpha)|\, s\in\{0,1,...,n_\alpha\},\ \alpha\in\{1,2,3\}\}$. Here, $\alpha$ labels the legs of the graph and $L_\alpha:=\{v_{i,\alpha}\in P|\, i=1,2,...,n_\alpha\}$ is the set of vertices of the $\alpha$-leg. 
%We will write $L_\alpha (P)$ and $n_\alpha (P)$ when we want to point out the graph $P$ to which these objects belong. 
The \emph{string} associated to the leg $L_\alpha$ is the $n_\alpha$-tuple of integers $(a_{1,\alpha},...,a_{n_\alpha,\alpha})$, where $a_{i,\alpha}:=-Q_{P}(v_{i,\alpha}, v_{i,\alpha})\geq 0$. The three legs are connected to a common central vertex, which we denote indistinctly by $v_0=v_{0,1}=v_{0,2}=v_{0,3}$ (notice that, with our notation, $v_0$ does not belong to any leg).

Let $(\Z^{n},-\mathrm{Id})$ be the standard negative diagonal lattice with the $n$ elements of a fixed basis $\catE$ labeled as $\{e_{j}\}_{j=1}^{n}$.  As an abbreviation in notation we will write $e_j\cdot e_i$ to denote $-\mathrm{Id}(e_j,e_i)$. If the intersection lattice $(\Z^{b_{2}(M_{P})},Q_{P})$ admits an embedding $\iota$ into $(\Z^n,-\mathrm{Id})$,  we will then write $P\subseteq\Z^{n}$ and will omit the $\iota$ in the notation, i.e.\ instead of writing $\iota (v)=\sum_j x_je_j$ we will directly write $v=\sum_j, x_je_j$.
%We are interested in three-legged plumbing graphs $P$ whose associated intersection lattice admits an embedding into $\Z^n$, where $n=|P|$ is the number of vertices in the graph. 

We will adopt the following notation: for each $i\in\gbra{1,...,n}$, $(s,\alpha)\in J$ and  $S\subseteq P$ we define
\begin{align*}
%E_i(P) & :=\{(s,\alpha)\in J\,|\ v_{s,\alpha}\cdot e_i\neq 0\}, \\
V_S & :=\{j\in\{1,...,n\}\,|\ e_j\cdot v_{s,\alpha}\neq 0\mbox{ for some }v_{s,\alpha}\in S\}.
%p_i(P) & :=\left| \gbra{ j\in\{1,...,n\}\,|\ |E_j(P)|=i } \right|.
\end{align*}
Given $e\in\Z^n$ with $e\cdot e=-1$, we denote by $\pi_e:\Z^n\to\Z^n$ the orthogonal projector onto the subspace orthogonal to $e$, i.e.
$$\pi_e(v):=v+(v\cdot e)e\in\Z^n,\s\s\forall v\in\Z^n.$$

\subsection{Complementary legs}\label{s:cl}
The key concept in this paper is that of complementary legs. As explained in the introduction, they are pairs of legs in the graph $P$ associated to Seifert invariants with the property $\tfrac {\beta_{i}}{\alpha_{i}}+\tfrac {\beta_{j}}{\alpha_{j}}=1$. 

Suppose that in the graph displayed at the beginning of Section~\ref{s:graphs} the legs $L_{2}$ and $L_{3}$ were complementary. It is well known then that the strings of integers $(a_{1,1},...,a_{n_{1},1})$ and $(a_{1,2},...,a_{n_{2},2})$ are related to one another by Riemenschneider's point rule \cite{pointrule} and that there is an embedding of $L_{2}\cup L_{3}\subseteq\Z^{n_{2}+n_{3}}$ \cite{liscasum}. There essential features of this embedding that we will need in this note are:
\begin{itemize}
\item There exists a sequence of contractions (see \cite{lisca} for the definition and general theory of embeddings of linear sets) to the legs $\tilde L_{2}$ and $\tilde L_{3}$ with associated strings $(2)$ and $(2)$. The embedding $\tilde L_{2}\cup\tilde L_{3}\subseteq\Z^{2}$ is unique and given by $e_{1}+e_{2}$ and $e_{1}-e_{2}$.
\item The basis vector $e_{1}$ appears only twice in the embedding $L_{2}\cup L_{3}\subseteq\Z^{n_{2}+n_{3}}$.
\item Given a linear weighed graph $L$ with string $(b_{1},\dots,b_{k})$ and a embedding $L\subseteq\Z^{k}$, it is possible to combine $L$ and two complementary legs to yield a 3 legged graph $P_{L}$ with a embedding in $\Z^{k+n_{2}+n_{3}}$. Indeed, the three legs will be the complementary legs $L_{2}$ and $L_{3}$, and the leg $L_{1}$ with associated string $(b_{1},\dots,b_{k-1})$; the central vertex will have weight $b_{k}+1$ and the embedding is the obvious one, adding $-e_{1}$ to the embedding of the vertex in $L$ with weight $b_{k}$.
\item The embedding in $\Z^{k+n_{2}+n_{3}}$ of the graph $P_{L}$ constructed in the preceding point can be contracted to an embedding of $P_{L}^{c}\subseteq\Z^{k+2}$ which is a graph with $\tilde L_{2}$ and $\tilde L_3$ as complementary legs.
\end{itemize}

A final notion that will appear in the text and is related to complementary legs is that of the \emph{dual} of a string of integers $(a_{1},\dots,a_{n})$, $a_{i}\geq 2$. It refers to the unique string $(b_{1},\dots,b_{m})$, $b_{i}\geq 2$ such that 
$$
\frac{1}{[a_{1},\dots,a_{n}]}+\frac{1}{[b_{1},\dots,b_{m}]}=1.
$$

\subsection{Montesinos links}
Seifert spaces admit an involution which presets them as double covers of $S^{3}$ with branch set a link. This link can be easily recovered from any plumbing graph $P$ which provides a surgery presentation for the Seifert manifold $Y_{P}$. As shown in Figure~\ref{f:bandas} the Kirby diagram obtained from $P$ is a strongly invertible link. The involution on $Y_{P}$, which is a restriction of an involution on $M_{P}$, yields as quotient $S^{3}$, boundary of $B^{4}$, and the branch set consists of a collection of twisted bands plumbed together. The twists correspond to the weights in the graph and the plumbing to the edges. The Montesinos link is by definition the boundary of the surface obtained by plumbing bands \cite{montesinos}. An example is shown in Figure~\ref{f:bandas}.

Given a three legged graph $P$, there is a unique Montesinos link $\Ml_{P}$ which can be obtained by the above procedure. If we start with a linear graph $P'$, then the same procedure yields a 2-bridge link, which we will denote by $K(p,q)$ where $Y_{P'}$ is the lens space $L(p,q)$.
\begin{figure}[h!]
\begin{center}
\frag[n]{s}{$=$}
\frag[n]{f}{$\cong$}
\frag{(a)}{\textbf{(a)}}
\frag{(b)}{\textbf{(b)}}
\frag{1}{$1$}
\frag{2}{$2$}
\frag{p}{$-2$}
\frag{3}{$3$}
\frag{1}{$-1$}
\frag{u}{$u$}
\includegraphics[scale=0.8]{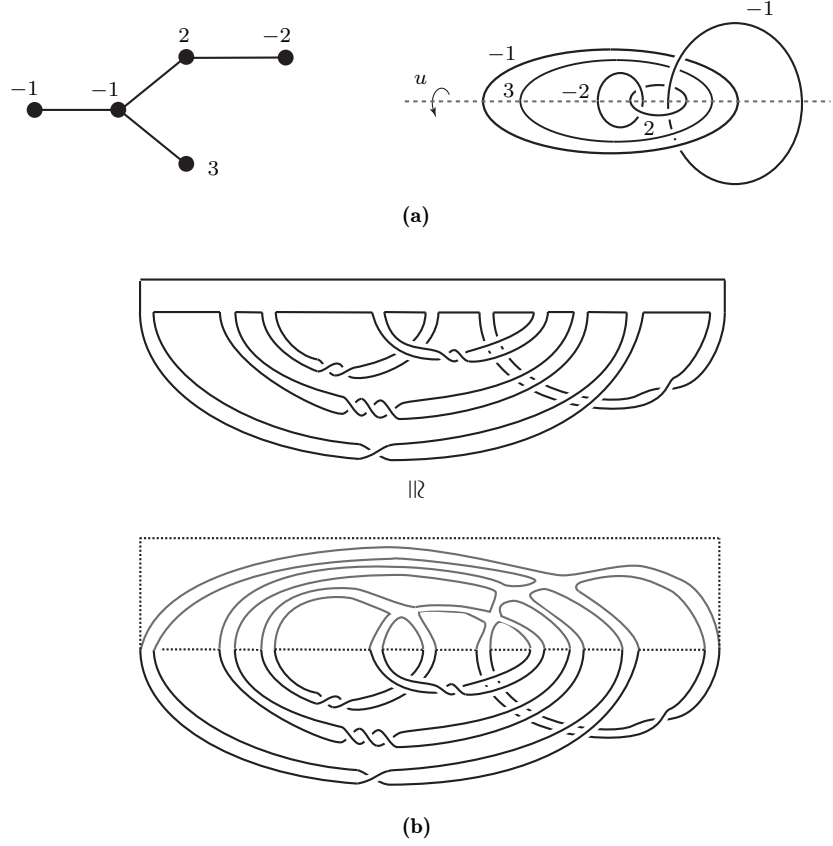}
\captionsetup{width=\linewidth}
\caption{\small Part \textbf{(a)} shows a 3 legged plumbing graph and its associated Kirby diagram as a strongly invertible link. The bottom picture shows the branch surface in $B^{4}$ which is the image of Fix($u$). This surface is homeomorphic to the result of plumbing bands according to the initial graph: the gray lines retract onto the sides of the rectangle.}
\label{f:bandas}
\end{center}
\end{figure}

\section{The list $\mathscr L$ and consequences}\label{s:proofs}

The key feature of plumbing graphs with complementary legs is the following proposition. A similar proof in a more general setting can be found in \cite[Proposition~4.6]{AcetoTesis} (cf. \cite[Remark~6.5]{pretzelYo}).

\begin{prop}\label{p:cl+ball}
Let $P$ be a 3 legged canonical graph  with two complementary legs $L_{2}$ and $L_{3}$ and such that the string associated to $L_{1}\cup\{v_{0}\}$ is of the form  $(a_{n_1,1},...,a_{1,1},a_{0})$. Then, there is a rational homology cobordism between $Y_{P}$ and the 3--manifold  $Y_{P'}$ associated to the linear graph with string $(a_{n_1,1},...,a_{1,1},a_{0}-1)$.
\end{prop}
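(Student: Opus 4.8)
The plan is to realize $Y_P$ and $Y_{P'}$ as the two ends of an explicit rational homology cobordism, obtained by grafting in the rational homology ball carried by the complementary pair, and to verify acyclicity homologically with the order computation as a guide.

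First I would unwind the complementary condition. Since $\gcd(\alpha_i,\beta_i)=1$, the relation $\tfrac{\beta_2}{\alpha_2}+\tfrac{\beta_3}{\alpha_3}=1$ forces $\alpha_2=\alpha_3=:\alpha$ and $\beta_3=\alpha-\beta_2$, so that the strings of $L_2$ and $L_3$ are mutually dual in the sense of Section~\ref{s:cl}; this is the structural fact that makes the two legs behave like a rational homology ball. As a numerical guide I would record the order computation. Writing the order of $H_1$ as $|\det Q|$ and combining the determinant formula for a three legged plumbing with the complementary relation, one obtains
\begin{equation*}
|H_1(Y_P)|=\alpha^2\,|\alpha_1(a_0-1)-\beta_1|,\qquad |H_1(Y_{P'})|=|\alpha_1(a_0-1)-\beta_1|,
\end{equation*}
whence $|H_1(Y_P)|=\alpha^2\,|H_1(Y_{P'})|$. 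The perfect square $\alpha^2$ is precisely the fingerprint of a rational homology ball of order $\alpha^2$ being inserted, and it is exactly the kind of discrepancy a rational homology cobordism is allowed to (and here must) produce.

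Next I would produce the ball and the cobordism. Consider the linear configuration $C$ gotten by joining $L_2$ and $L_3$ through a central $(-2)$--vertex, i.e.\ the string $(a_{n_2,2},\dots,a_{1,2},2,a_{1,3},\dots,a_{n_3,3})$. Reading each leg from the center outward and using $\tfrac{\beta_2}{\alpha_2}+\tfrac{\beta_3}{\alpha_3}=1$, one computes $|\det Q_C|=\alpha^2(2-\tfrac{\beta_2}{\alpha}-\tfrac{\beta_3}{\alpha})=\alpha^2$, so $Y_C$ is a lens space of square order; by the classification of \cite{lisca} (cf.\ \cite{cassonharer}) together with the embedding of $L_2\cup L_3$ recalled in Section~\ref{s:cl} (the one in which $e_1$ occurs exactly twice and contracts to $e_1\pm e_2$), $Y_C$ bounds a rational homology ball $B$. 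The cobordism $W$ between $Y_P$ and $Y_{P'}$ is then the one that, in a single step, splits the $(-a_0)$--handle of $M_P$ into a $(-2)$--framed central sphere carrying $L_2$ and $L_3$ (with the remaining weight $-(a_0-1)$ left attached to $L_1$), recognises the resulting sub-plumbing as $\nu(C)$, and replaces it by $B$. Since this swap deletes the $n_2+n_3+1$ two--handles of $\nu(C)$ and inserts a piece with $b_2(B)=0$, while $|\det Q_C|=\alpha^2\neq 0$ makes $Q_C$ nondegenerate, the glued-in piece is a $\Q$--homology equivalence rel boundary; hence $H_*(W;\Q)\cong H_*(S^3\times[0,1];\Q)$, i.e.\ $W$ is a rational homology cobordism. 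The identification of the far end with $Y_{P'}$ comes from the fact that the last handle of $L_1$ is attached along the meridian of the central $(-2)$--vertex of $C$, so that blowing $C$ down raises the weight of the vertex carrying $L_1$ by exactly $+1$, converting the string of $L_1\cup\{v_0\}$ from $(a_{n_1,1},\dots,a_{1,1},a_0)$ into $(a_{n_1,1},\dots,a_{1,1},a_0-1)$.

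The step I expect to be the main obstacle is precisely this last bookkeeping: the central vertex $v_0$ is shared between $L_1$ and the two complementary legs, so one cannot excise a neighbourhood of the legs without meeting $L_1$. I would have to check that the splitting of the $(-a_0)$--handle, followed by the replacement of $\nu(C)$ by $B$, interacts with the meridian of $v_0$ in exactly the way that produces the $+1$ weight shift and nothing more, i.e.\ that the gluing of $B$ is compatible with the framing data of $L_1$. This is where complementarity ($\alpha_2=\alpha_3$, dual strings, the ``$e_1$ twice'' embedding) is indispensable, and where I would lean most heavily on the explicit embedding of the complementary legs from Section~\ref{s:cl} and on \cite{lisca}. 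A cleaner alternative, mirroring \cite[Proposition~4.6]{AcetoTesis} (cf.\ \cite[Remark~6.5]{pretzelYo}), is to write $W$ directly as a Kirby diagram on $Y_P\times[0,1]$ with the handles of the replacement attached, and to verify rational acyclicity by computing the relative linking form, whose determinant is again the square $\alpha^2$.
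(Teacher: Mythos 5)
There is a genuine gap, and it is not the ``bookkeeping'' you flagged but the move itself: \emph{splitting} the central $(-a_0)$-vertex into a $(-(a_0-1))$-vertex carrying $L_1$ and a $(-2)$-vertex carrying $L_2\cup L_3$ is not a boundary-preserving operation on plumbings, so the $4$-manifold you go on to modify does not have boundary $Y_P$. Concretely, take $L_1=\emptyset$, $L_2=L_3=(2)$ (complementary, with $\alpha=2$) and central weight $-a_0$: then $P$ is the linear chain $(2,a_0,2)$ and $|H_1(Y_P)|=4(a_0-1)$, while the split graph (a $(-2)$-central vertex with legs $(2)$, $(2)$, $(a_0-1)$) bounds a $3$-manifold with $|H_1|=4(a_0-2)$; for $a_0=3$ these orders are $8$ and $4$. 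The deeper reason your plan is forced into this illegal move is that the piece you want to excise, $\nu(C)$ with central weight $-2$, simply does not sit inside $M_P$: the actual central vertex has weight $-a_0$, and the honest sub-plumbing on $L_2\cup\{v_0\}\cup L_3$ has boundary a lens space of order $\alpha^2(a_0-1)$, which in general bounds no rational ball. Separately, even if the splitting were legitimate, replacing the \emph{interior} codimension-zero piece $\nu(C)$ by the rational ball $B$ changes the filling but not the boundary: the resulting $4$-manifold has a single boundary component, so it is not a cobordism between $Y_P$ and $Y_{P'}$, and the claimed conclusion $H_*(W;\Q)\cong H_*(S^3\times[0,1];\Q)$ has no two-ended object to apply to. Your preliminary computations are correct and consistent ($\alpha_2=\alpha_3=\alpha$, the relation $|H_1(Y_P)|=\alpha^2|H_1(Y_{P'})|$, $|\det Q_C|=\alpha^2$, and $Y_C$ does bound a rational ball), but they do not bridge this gap; note also that the lattice embedding of $L_2\cup L_3$ recalled in Section~\ref{s:cl} is a \emph{consequence} of bounding a rational ball, not evidence for it, so it cannot be invoked in that direction.

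For comparison, the paper's proof rests on a different and much shorter geometric input: by \cite[Lemma~3.1]{MontesinosYo}, one can attach a single $2$-handle $h$ to $M_P$ so that the new boundary is $(S^1\times S^2)\#Y_{P'}$; attaching a $3$-handle $k$ along an essential sphere of the $S^1\times S^2$ summand then yields $Y_{P'}$. The cobordism is $(Y_P\times[0,1])\cup h\cup k$, and rational acyclicity --- equivalently, the transport of rational homology balls across it in both directions --- is verified by an explicit cellular-homology argument with the dual handles $\tilde h$, $\tilde k$, using that the attaching sphere of $\tilde h$ meets the belt sphere of $\tilde k$ algebraically nontrivially. Your closing alternative (handles attached to $Y_P\times[0,1]$, checked by a homological computation) points in exactly this direction, but without identifying those handles --- which is the content of \cite[Lemma~3.1]{MontesinosYo} and the place where complementarity is genuinely used --- the proposal does not yet contain a proof.
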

\begin{proof}
Since the graph $P$ has two complementary legs, one can attach a 4 dimensional $2-$handle $h$ to $M_P$ obtaining a $4-$manifold whose boundary is $(S^{1}\times S^{2})\# Y_{P'}$, where $P'$ is a linear graph with associated string $(a_{n_1,1},a_{n_1-1,1},...,a_{1,1},a_{0}-1)$ \cite[Lemma~3.1]{MontesinosYo}. Notice that the dual $2-$handle $\tilde h$, attached to $(S^1\times S^2)\# Y_{P'}$, kills the free part of $H_1((S^1\times S^2)\# Y_{P'};\Z)$, since $Y_P=\partial M_P$ is a rational homology sphere. Moreover, attaching a $3-$handle $k$ to $(S^1\times S^2)\# Y_{P'}$ along one of the essentials spheres of the summand $S^1\times S^2$, the free part of $H_2((S^1\times S^2)\# Y_{P'};\Z)$ is killed and one obtains $Y_{P'}$. Dually, attaching a $1-$handle $\tilde k$ to $Y_{P'}$ yields $(S^1\times S^2)\# Y_{P'}$. 

It follows that, if $Y_{P'}$ bounds a rational homology ball $W$, then $Y_P$ is the boundary of the $4-$manifold $X:=W\cup\tilde k\cup\tilde h$, where the attaching sphere of $\tilde h$ has non-trivial algebraic intersection with the belt sphere of $\tilde k$, since $\tilde h$ kills the free part of $H_1((S^1\times S^2)\# Y_S;\Z)$. We claim that $X$ is a rational homology $4-$ball. In fact, since $W$ is a rational homology ball, we have $H_4(X;\Q)=H_3(X;\Q)=0$. The rest of the homology groups can be easily computed using cellular homology with rational coefficients: calling $X^m$ the $m-$skeleton of the CW--complex $X$, the cellular chain complex is
\begin{align*}
\xymatrix{
\cdots
\ar[r] 
&
H_3(X^3,X^2;\Q)
\ar[r]^{d_3}
&
H_2(X^2,X^1;\Q)
\ar[r]^{d_2} 
& 
H_1(X^1,X^0;\Q)
\ar[r]^(.7){d_1}
&
\Q\ .
}
\end{align*}
Since the attaching sphere of $\tilde h\in H_2(X^2,X^1;\Q)$ has non-trivial algebraic intersection with the belt sphere of $\tilde k\in H_1(X^1,X^0;\Q)$, by definition of the boundary operator $d_2$, we have $\tilde h\not\in\mathrm{Ker}\,d_2$, and since $W$ is a rational homology ball it follows that $H_2(X;\Q)=0$. Finally, we assume, without loss of generality, that $X$ is connected and hence $d_1\equiv 0$. Therefore, we have $H_1(X;\Q)=H_1(X^1,X^0;\Q)/\mathrm{im}\,d_2$. The group $H_1(X^1,X^0;\Q)$, which is a direct sum of finitely many $\Q$, has one more $\Q$ summand, coming from the $1-$handle $\tilde k$, than $H_1(W^1,W^0;\Q)$. However, $\tilde k\in\mathrm{im}\,d_2$ and therefore $H_1(X;\Q)=H_1(W;\Q)=0$. In fact, an appropriate decomposition of $X$ and $W$ gives the following commutative diagram.
\begin{align*}
\xymatrix{
H_2(W^2,W^1;\Q)
\ar[r]^{d_2}
\ar[d]^{i_\ast}
&
H_1(W^1,W^0;\Q)
\ar[r]^(.7){d_1}
\ar[d]^{i_\ast} 
& 
0
\\
H_2(X^2,X^1;\Q)
\ar[r]^{d_2}
& H_2(X^1,X^0;\Q)
}
\end{align*}
Note that the first row of the diagram is exact because $W$ is connected and $H_1(W;\Q)=0$. Since the attaching sphere of $\tilde h$ has non-trivial algebraic intersection with the belt sphere of $\tilde k$, there exists some $\alpha\in H_1(W^1,W^0;\Q)$ such that $d_2(\tilde h)=q\tilde k+i_\ast(\alpha)$, where $q\in\Q$, $q\neq 0$. The exactness of the first row implies that $\alpha=d_2(\beta)$ for some $\beta\in H_2(W^2,W^1;\Q)$. Therefore
$$d_2(\tilde h-i_\ast(\beta))=d_2(\tilde h)-d_2i_\ast (\beta)=q\tilde k+i_\ast (\alpha)-i_\ast d_2(\beta)=q\tilde k,$$
and, as claimed, $X$ is a rational homology ball.

Vice versa, if $Y_P$ bounds a rational homology ball $Z$, then $Y_{P'}$ is the boundary of a $4-$manifold $V:=Z\cup h\cup k$, where this time the attaching sphere of $k$ has non-trivial algebraic intersection with the belt sphere of $h$. It follows, arguing as before, that $V$ is a rational homology ball. We conclude that there exists a rational homology cobordism between $Y_P$ and $Y_{P'}$.
\end{proof}

If $P$ is a canonical 3 legged graph and $Y_{P}$ is the boundary of a rational homology ball $W$, then there is an embedding of the lattice associated to $H_{2}(M_{P},\Z)$ into the standard negative lattice of the same rank. Indeed, since $P$ is canonical the intersection form $Q_{P}$ of $M_{P}$ is negative definite \cite[Theorem~5.2]{NR} and hence, $X_{P}:=M_{P}\cup_{Y_{P}}(-W)$ is a closed smooth negative $4$-manifold. By Donaldson's Theorem \cite{Do} the intersection lattice of $X_{P}$ is isomorphic to $(\Z^{n},-\mathrm{Id})$, where $n=b_2(X_P)$. Moreover, since $W$ is a rational homology ball $n=b_{2}(M_{P})$ and therefore, via $X$, there is an embedding of $(H_{2}(M_{P},\Z),Q_{P})$ into $(\Z^{n},-\mathrm{Id})$, which we simply write as $P\subseteq\Z^{\vline P\vline}$. In the next lemma we study the consequences of this obstruction when the graph $P$ has two complementary legs.

\begin{lem}\label{l:gs+cl}
Let $P$ be a canonical 3 legged graph with two complementary legs $L_{2}$ and $L_{3}$ and suppose that $Y_{P}$ is the boundary of a rational homology ball. Then, the linear set $L_1\cup\{v_0\}$ has associated string of length $n_1+1$ of the form
$$(a_{n_1,1},...,a_{r,1},2^{[r]}),$$
where $0\leq r\leq n_1+1$ and the string $(a_{n_1,1},...,a_{r,1}-1)$ corresponds to a linear graph admitting an embedding in $\Z^{n_1-r+1}$ and defining a lens space $L_{P'}$.
\end{lem}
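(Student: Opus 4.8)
The plan is to use Proposition~\ref{p:cl+ball} to trade the three--legged graph for a single linear string, and then to extract the normal form of that string by an elementary continued fraction reduction. Since $Y_{P}$ bounds a rational homology ball and has two complementary legs, Proposition~\ref{p:cl+ball} gives a rational homology cobordism between $Y_{P}$ and the lens space $Y_{P'}$ associated to the linear string $(a_{n_1,1},\dots,a_{1,1},a_0-1)$; in particular $Y_{P'}$ also bounds a rational homology ball. All the information required by the lemma is therefore contained in this one string, and the rest of the work is combinatorial together with a single appeal to the Donaldson--type obstruction established just before the statement.

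Before reducing I would record one constraint coming from canonicity. As $Q_{P}$ is negative definite, the orbifold Euler number of $Y_{P}$ is negative, that is $a_0>\tfrac{\beta_1}{\alpha_1}+\tfrac{\beta_2}{\alpha_2}+\tfrac{\beta_3}{\alpha_3}$; since the two complementary legs contribute exactly $1$ to this sum, we obtain $a_0\geq 2$. Hence the last entry $a_0-1$ is at least $1$, and as every leg entry already satisfies $a_{i,1}\geq 2$, the string $(a_{n_1,1},\dots,a_{1,1},a_0-1)$ can fail to be in normal form only through a single trailing $1$.

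Next I would normalize. If $a_0\geq 3$ the string is already admissible and $r=0$. If $a_0=2$ the tail equals $1$, and I would iterate the negative continued fraction identity $[\dots,x,1]=[\dots,x-1]$. Writing $r$ for the number of entries equal to $2$ at the central end of $(a_{n_1,1},\dots,a_{1,1},a_0)$---so that this string has exactly the announced shape $(a_{n_1,1},\dots,a_{r,1},2^{[r]})$ with $a_{r,1}\geq 3$---the identity eats successively through these $r$ twos and stops at $a_{r,1}-1\geq 2$, producing the admissible string $(a_{n_1,1},\dots,a_{r,1}-1)$ of length $n_1-r+1$. The associated linear graph $P''$ is negative definite and satisfies $Y_{P''}=Y_{P'}$, the lens space $L_{P'}$ of the statement; since it bounds a rational homology ball, the Donaldson argument preceding the lemma provides an embedding of its intersection lattice into $(\Z^{n_1-r+1},-\mathrm{Id})$, which is precisely the asserted embedding.

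The point that demands care is the bookkeeping of this reduction and its two extremes: one must check that the cascade consumes exactly the trailing twos and halts at the first entry $\geq 3$, and treat the degenerate case $r=n_1+1$, where the whole string is made of twos, $Y_{P'}=S^{3}$, and the empty reduced string embeds trivially. I do not expect the complementary legs to intervene again beyond Proposition~\ref{p:cl+ball}, as their embedding rigidity recorded in Section~\ref{s:cl} has already been absorbed into that cobordism; and the troublesome case $a_0=1$, which would otherwise force a trailing $0$, is excluded by the Euler number inequality above.
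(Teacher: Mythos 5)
Your proof is correct, but its core runs along a genuinely different route from the paper's. Both arguments start the same way: Proposition~\ref{p:cl+ball} converts the hypothesis into the statement that $Y_{P'}$, the boundary of the linear plumbing with string $(a_{n_1,1},\dots,a_{1,1},a_0-1)$, bounds a rational homology ball. From there the paper applies the Donaldson embedding obstruction to $P'$ itself (trailing $1$ and all), then uses the complementary-legs machinery of Section~\ref{s:cl} a \emph{second} time to re-attach $L_2\cup L_3$ and extend the embedding of $P'$ to an embedding of the full graph $P$ with $V_{L_1}\cap V_{L_2\cup L_3}=\emptyset$; the inequality $a_0\geq 2$, the trailing-$2$ structure, and the embedding of the reduced string $(a_{n_1,1},\dots,a_{r,1}-1)$ are then all extracted inside the lattice, by contracting the complementary legs and projecting with $\pi_{e_i}$. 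You reverse the order of operations: you first normalize the string topologically, blowing down the trailing $(-1)$-vertex repeatedly (the identity $[\dots,x,1]=[\dots,x-1]$), which changes neither the boundary three-manifold nor the rational-ball hypothesis, and only then invoke Donaldson, applied directly to the normalized negative-definite linear graph of length $n_1-r+1$; the complementary legs never reappear. Two remarks on the ingredients this requires. First, your appeal to the Donaldson paragraph for a linear rather than three-legged graph is harmless: the argument there is verbatim the same, and the paper's own proof makes the identical extension when it asserts $P'\subseteq\Z^{|P'|}$. Second, the inequality $a_0\geq2$, which the paper gets as a by-product of its embedding analysis, you must supply separately, and your derivation from negative definiteness of $Q_P$ via negativity of the orbifold Euler number (the content of the same \cite{NR} citation) is correct and is exactly what rules out a trailing $0$. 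As for what each route buys: yours is shorter, keeps all combinatorics at the level of strings and Kirby moves, and dovetails with the Corollary following the lemma, whose proof is precisely your blow-down bookkeeping; the paper's route, at the cost of the lattice gymnastics, yields structural information about embeddings of three-legged graphs with complementary legs (the support disjointness and the form $v_0=\tilde v_0-e_j$ of the central vertex) in the spirit of \cite{lisca,MontesinosYo}, information that the remarks of Section~\ref{s:remarks} implicitly draw on.
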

\begin{proof}
Since $Y_{P}$ bounds a rational homology ball, the space $Y_{P'}$ from Proposition~\ref{p:cl+ball} does too and therefore it holds $P'\subseteq\Z^{\vline P'\vline}$. From Section~\ref{s:cl} we know that it is always possible to extend the embedding of $P'\subseteq\Z^{\vline P'\vline}$ to an embedding $P\subseteq\Z^{\vline P\vline}$ in such a way that $V_{L_1}\cap V_{L_2\cup L_3}=\emptyset$. It follows that if $Y_{P}$ bounds a rational homology ball then there is an embedding $P\subseteq\Z^{\vline P\vline}$ with the property  $V_{L_1}\cap V_{L_2\cup L_3}=\emptyset$ which allows us to contract the complementary legs to $\widetilde L_2=\{v_{1,2}=e_j+e_k\}$ and $\widetilde L_3=\{v_{1,3}=e_j-e_k\}$ for some $j,k\in\{1,...,n\}$. 

Since $v_0\cdot v_{1,2}=v_0\cdot v_{1,3}=1$, then $j\in V_{v_0}$, $|e_j\cdot v_0|=1$ and $k\not\in V_{v_0}$. If $v\in L_1$ was such that $V_v\cap\{e_j,e_k\}\neq\emptyset$, then we could not have $v\cdot v_{1,2}=0$ and $v\cdot v_{1,3}=0$, therefore $V_{L_1}\cap V_{L_2\cup L_3}=\emptyset$. Since $v_{1,1}\cdot v_0=1$, we have $V_{v_0}\cap V_{L_1}\neq\emptyset$ and hence $v_0=\tilde v_0-e_j$ with $V_{\tilde v_0}\subseteq V_{L_1}$. Moreover, it follows that $a_0=-v_0\cdot v_0\geq 2$.

If $a_0\geq 3$ then $\tilde v_0\cdot\tilde v_0\leq -2$. By definiton of complementary legs  $|V_{L_2\cup L_3}|=n_2+n_3$ and therefore $P'_{n_1+1}:=L_1\cup\{\tilde v_0\}\subseteq\Z^{n_1+1}$. Notice that in this case the $r$ in the statement equals $0$.

If $a_0=2$ let us call $1\leq r\leq n_1+1$ the bigest integer such that $a_{0,1}=...=a_{r-1,1}=2$. It is not difficult to check that the $r$ vectors $v_{0,1},...,v_{r-1,1}$ belong to the span of $r+1$ basis vectors. If $r=n_1+1$, then $|V_{L_1\cup\{v_0\}}|=n_1+2$, by definition of complementary legs $|V_{L_2\cup L_3}|=n_2+n_3$ and $V_{L_1\cup\{v_0\}}\cap V_{L_2\cup L_3}=\{j\}$. Therefore every 3 legged canonical graph with two complementary legs $L_2$ and $L_3$ satisfying $a_{j,1}=2$ $\forall j\in\{0,...,n_1\}$ admits an embedding into $\Z^n$, where $n=n_1+n_2+n_3+1$. 

Finally, if $r<n_1+1$ there exists $i\in\{1,...,n\}$ such that $\{i\}=V_{v_{r,1}}\cap V_{v_{r-1,1}}$ and since $|V_{L_1}\cap V_{v_0}|=1$ we have that 
$$
P'_{n_1-r+1}:=(L_1\setminus\bigcup_{j=2}^{r+1} v_{j-1,1})\cup\pi_{e_i}(v_{r,1})\subseteq\Z^{n_1-r+1}.
$$
Since $v_{r-1,1}\cdot v_{r,1}=1$, we have $\pi_{e_i}(v_{r,1})\cdot\pi_{e_i}(v_{r,1})=-a_{r,1}+1$ and the statement follows.
\end{proof}

Combining Proposition~\ref{p:cl+ball} and Lemma~\ref{l:gs+cl} and using the same notation and conventions we immediately obtain:

\begin{cor}
The 3--manifold $Y_P$ bounds a rational homology ball if and only if either $r=n_1+1$ or the lens space $L_{P'}$ bounds a rational homology ball.
\end{cor}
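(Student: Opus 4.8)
The plan is to obtain the statement by combining the two preceding results almost verbatim, the only real work being the identification of the two linear plumbings that occur in them. First I would invoke Proposition~\ref{p:cl+ball}, which furnishes a rational homology cobordism between $Y_P$ and the linear plumbing with string $(a_{n_1,1},\dots,a_{1,1},a_0-1)$; write $Y_{P'}$ for its boundary. As already established inside the proof of that proposition (capping the cobordism off on either end), $Y_P$ bounds a rational homology ball if and only if $Y_{P'}$ does, so the whole problem is reduced to the lens space $Y_{P'}$.

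The crux is then to recognise $Y_{P'}$ as the manifold singled out in Lemma~\ref{l:gs+cl}. With $r$ the number of trailing $2$'s in the string $(a_{n_1,1},\dots,a_{1,1},a_0)$ of $L_1\cup\{v_0\}$, the string of $Y_{P'}$ ends in $(\underbrace{2,\dots,2}_{r-1},1)$. I would now repeatedly blow down the terminal $(-1)$--framed vertex, using that a linear plumbing with string $(\dots,c,1)$ has the same boundary as the one with string $(\dots,c-1)$. After $r$ blow-downs the string becomes $(a_{n_1,1},\dots,a_{r,1}-1)$ provided $r\leq n_1$, which is exactly the string defining the lens space $L_{P'}$ of Lemma~\ref{l:gs+cl}; since $a_{r,1}\geq 3$ the final entry $a_{r,1}-1$ is still $\geq 2$, so $L_{P'}$ is a genuine lens space. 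If instead $r=n_1+1$ then every entry of the string is a $2$ and the same blow-downs collapse the whole graph, giving $Y_{P'}=S^3$. As blowing down does not alter the boundary, in both cases $Y_{P'}$ equals the manifold named in the statement.

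Assembling the two steps yields the equivalence. When $r=n_1+1$ we have $Y_{P'}=S^3=\partial B^4$, which bounds a rational homology ball, and hence so does $Y_P$; this is the content of the first alternative. When $r\leq n_1$ the first step gives that $Y_P$ bounds a rational homology ball if and only if $Y_{P'}=L_{P'}$ does, which is the second alternative. Reading these implications in reverse supplies the converse direction: if $Y_P$ bounds a rational homology ball then $Y_{P'}$ does, and the identification above forces either $r=n_1+1$ or that $L_{P'}$ bounds a rational homology ball.

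The one step that needs care is the blow-down bookkeeping behind $Y_{P'}=L_{P'}$: one must verify that the trailing block has length exactly $r$, that the reduction stops on the entry $a_{r,1}$ and lowers it by precisely one, and that the outcome stays $\geq 2$ so that a bona fide lens space is produced. The borderline value $r=n_1+1$, in which $L_{P'}$ degenerates and must be replaced by $S^3$, is precisely what forces the separate clause in the statement, and I would dispose of it on its own rather than as a limiting case.
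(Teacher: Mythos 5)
Your proposal is correct and follows essentially the same route as the paper: both arguments identify the manifold $Y_{P'}$ supplied by Proposition~\ref{p:cl+ball} with $S^3$ when $r=n_1+1$ and with the lens space $L_{P'}$ of Lemma~\ref{l:gs+cl} when $r\leq n_1$, by collapsing the trailing block $(2,\dots,2,1)$, and then conclude via the rational homology cobordism of Proposition~\ref{p:cl+ball}. The paper simply states these string identifications without the blow-down bookkeeping you spell out, so your write-up is a more detailed version of the same proof.
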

\begin{proof}
From the proof of Lemma~\ref{l:gs+cl} we know that if $r=n_1+1$ then the string associated to $P'$ is $(2,2...,2,1)$ and therefore $Y_{P'}=S^3$ which obviously bounds a rational homology ball. On the other hand, if $r<n_1+1$ then the string is of the form $(a_{n_1,1},...,a_{r,1},2,...,2,1)$ and therefore $Y_{P'}=L_{P'}$ where $L_{P'}$ is a lens space with associated string $(a_{n_1,1},...,a_{r,1}-1)$. We conclude using Proposition~\ref{p:cl+ball}.
\end{proof}

\subsection{The List}\label{thelist}
The above analysis yields a complete list $\mathscr L$ of Seifert spaces with 3 exceptional fibers and two complementary legs bounding rational homology balls and proves Theorem~\ref{main}. Every element in $\mathscr L$ is, with one of the two orientations, the boundary of a 4--manifold obtained by plumbing according to a graph 
{\small
\[
  \begin{tikzpicture}[xscale=1.3,yscale=-0.5]
    \node (A0_6) at (6, 0) {$-a_{1,2}$};
    \node (A0_8) at (8, 0) {$-a_{n_{2},2}$};
    \node (A1_6) at (6, 1) {$\bullet$};
    \node (A1_7) at (7, 1) {$\dots$};
    \node (A1_8) at (8, 1) {$\bullet$};
    \node (A2_0) at (0, 2) {$-a_{n_{1},1}$};
    \node (A2_2) at (2, 2) {$-a_{r,1}$};
    \node (A2_3) at (3, 2) {$-2$};
    \node (A2_5) at (5, 2) {$-2$};
    \node (A3_0) at (0, 3) {$\bullet$};
    \node (A3_1) at (1, 3) {$\dots$};
    \node (A3_2) at (2, 3) {$\bullet$};
    \node (A3_3) at (3, 3) {$\bullet$};
    \node (A3_4) at (4, 3) {$\dots$};
    \node (A3_5) at (5, 3) {$\bullet$};
    \node (A4_6) at (6, 4) {$-a_{1,3}$};
    \node (A4_8) at (8, 4) {$-a_{n_{3},3}$};
    \node (A5_6) at (6, 5) {$\bullet$};
    \node (A5_7) at (7, 5) {$\dots$};
    \node (A5_8) at (8, 5) {$\bullet$};
    \path (A3_0) edge [-] node [auto] {$\scriptstyle{}$} (A3_1);
    \path (A3_3) edge [-] node [auto] {$\scriptstyle{}$} (A3_4);
    \path (A3_4) edge [-] node [auto] {$\scriptstyle{}$} (A3_5);
    \path (A1_6) edge [-] node [auto] {$\scriptstyle{}$} (A1_7);
    \path (A1_7) edge [-] node [auto] {$\scriptstyle{}$} (A1_8);
    \path (A5_6) edge [-] node [auto] {$\scriptstyle{}$} (A5_7);
    \path (A5_7) edge [-] node [auto] {$\scriptstyle{}$} (A5_8);
    \path (A3_2) edge [-] node [auto] {$\scriptstyle{}$} (A3_3);
    \path (A3_5) edge [-] node [auto] {$\scriptstyle{}$} (A5_6);
    \path (A3_5) edge [-] node [auto] {$\scriptstyle{}$} (A1_6);
    \path (A3_1) edge [-] node [auto] {$\scriptstyle{}$} (A3_2);
  \end{tikzpicture}
\]}
where the strings $(a_{1,2},...,a_{n_{2},2})$ and $(a_{1,3},...,a_{n_{3},3})$ are related to one another by Riemenschneider's point rule, the number of $-2$ in the left leg is arbitrary (including the possibility zero) and the string $(a_{r,1}-1,...,a_{n_{1},1})$, up to order reversal and duality, can be found in \cite[Remark~3.2]{MontesinosYo} which summarizes \cite[Lemmas~7.1, 7.2 and 7.3]{lisca}.

%\begin{rem}
%la diferencia entre las al menos 4 opciones que se obtienen a partir de un solo de la lista de Lisca.
%\end{rem}

\subsection{Montesinos links and ribbon surfaces} Given that there is a one-to-one correspondence between the set of Seifert spaces with at most 3 exceptional fibers and the Montesinos links which arise as branch sets, we can translate the analysis on the rational homology balls developed so far into the language of Montesinos links and ribbon surfaces. Theorem~\ref{ribbon} in the introduction is an immediate consequence of the following proposition.

\begin{prop}\label{p:famC}
Let $P$ be a canonical graph with two complementary legs. 
The Montesinos link $\Ml_P\subset S^3$ is the boundary of a ribbon surface $\Sigma$ with $\chi(\Sigma)=1$ if and only if the Seifert space $Y_P$ is the boundary of a rational homology ball.
\end{prop}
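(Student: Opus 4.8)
The plan is to prove the two implications of Proposition~\ref{p:famC} by exhibiting a geometric dictionary between the handle/cobordism constructions already used in Proposition~\ref{p:cl+ball} and the behaviour of the branch surface downstairs. The guiding principle is that the entire excerpt has been built so that bounding a rational homology ball upstairs (in the double cover) corresponds to bounding a ribbon surface of Euler characteristic one downstairs, and the involution presenting $Y_P$ as a double branched cover is the bridge. First I would recall, as explained in Section~\ref{s:notation} around Figure~\ref{f:bandas}, that the Seifert space $Y_P$ is the double cover of $S^3$ branched over the Montesinos link $\Ml_P$, and that this branched covering extends to a branched covering of 4--manifolds: $M_P\to B^4$ with branch set the pushed-in band surface $F$ whose boundary is $\Ml_P$, with $\chi(F)=1$.

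The key step is to reinterpret the cobordism of Proposition~\ref{p:cl+ball} equivariantly. That proposition builds, from the two complementary legs, a rational homology cobordism between $Y_P$ and $Y_{P'}$, where $Y_{P'}$ is the double branched cover of the 2--bridge link $K(p,q)=\partial F'$ associated to the linear graph $P'$. The plan is to show that this cobordism is the double branched cover of a cobordism of surfaces in $S^3\times[0,1]$ realizing an isotopy/band--move passage from $\Ml_P$ to $K(p,q)$ that changes $\chi$ in a controlled way; concretely, the $2$--handle $h$ attached along the complementary legs descends to a ribbon band attached to the branch surface. Thus I would reduce the statement to the linear (2--bridge) case: $\Ml_P$ bounds a ribbon $\Sigma$ with $\chi(\Sigma)=1$ if and only if the 2--bridge link $K(p,q)$ does, and $Y_P$ bounds a rational homology ball if and only if $Y_{P'}=L(p,q)$ does (this last equivalence being exactly the content of the Corollary and Lemma~\ref{l:gs+cl}).

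For the reduced 2--bridge case I would invoke the classification of Lisca \cite{lisca}: the lens space $L(p,q)$ bounds a rational homology ball precisely when $(p,q)$ lies on his explicit list, and for each such pair the corresponding 2--bridge link $K(p,q)$ is known to be ribbon with a band surface of Euler characteristic one (the ribbon moves are read off directly from the embedding data / the strings appearing in \cite[Remark~3.2]{MontesinosYo}). Conversely, if $\Ml_P$ bounds an embedded surface $\Sigma\subset B^4$ with $\chi(\Sigma)=1$, then by \cite[Lemma~17.2]{Ka} the double cover of $B^4$ branched over $\Sigma$ is a rational homology ball bounded by $Y_P$; this supplies the ``ribbon surface $\Rightarrow$ rational ball'' direction without needing to track ribbon-ness through the cover. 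The remaining direction, ``rational ball $\Rightarrow$ ribbon surface,'' is the one where I expect the real work: producing a surface that is not merely embedded but genuinely \emph{ribbon} and of the correct Euler characteristic.

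The main obstacle, then, is controlling the ribbon condition rather than mere sliceness. Taking the double branched cover of an arbitrary rational homology ball need not return an equivariant filling, so I cannot simply quotient the ball $W$ to recover a surface. Instead the plan is constructive: use the explicit handle description of $W$ coming from the list $\mathscr L$ (equivalently, from the contracted embeddings $P_L^c\subseteq\Z^{k+2}$ in Section~\ref{s:cl}) to build $\Sigma$ one band at a time, checking at each stage that the new singularities are ribbon (an interior arc met by a boundary arc) and that $\chi$ increases by exactly the right amount so that $\chi(\Sigma)=1$. The reversed strings, the complementary-leg contraction to $e_1\pm e_2$, and the dual-string formalism all feed into writing these bands down explicitly, and the delicate point will be verifying that the bands can be chosen disjoint and unknotted enough to avoid clasp (non-ribbon) singularities, which is precisely where the structure of the two complementary legs is used.
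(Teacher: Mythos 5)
Your first three paragraphs reproduce, in essence, the paper's own proof: the complementary legs give a band (ribbon) move relating $\Ml_P$ to a 2--bridge link, \cite[Theorem~1.2]{lisca} supplies a ribbon surface of Euler characteristic one for that 2--bridge link whenever the corresponding lens space bounds a rational homology ball (which, by Proposition~\ref{p:cl+ball}, happens exactly when $Y_P$ does), and the converse direction follows by taking the double cover of $B^4$ branched over the pushed-in surface, as in \cite{Ka}. The only differences are cosmetic: the paper quotes the band move directly from \cite[Lemma~3.3]{MontesinosYo} instead of re-deriving it equivariantly from the $2$--handle of Proposition~\ref{p:cl+ball}, and the move lands on $K(p,q)\sqcup U$ rather than on $K(p,q)$ alone --- you dropped the unknot component $U$, which matters for the bookkeeping, since the ribbon surface for $\Ml_P$ is built from $\Sigma'\sqcup D$ (a disk $D$ for $U$) together with the band, giving $\chi=1+1-1=1$.

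Your last paragraph, however, reveals a misunderstanding you should excise: there is no ``remaining direction'' left to prove. The direction ``rational ball $\Rightarrow$ ribbon surface'' is precisely what your reduction plus Lisca's theorem already delivers, because Lisca proves ribbon-ness, not mere sliceness, for the relevant 2--bridge links, and attaching the band of a ribbon move to a ribbon surface again yields a ribbon surface. The constructive band-by-band program from the list $\mathscr L$, and the attendant worry about clasp singularities, are therefore unnecessary. Your (correct) observation that one cannot simply quotient an arbitrary rational homology ball is also irrelevant here: no step of the argument ever does so, since the branched-cover construction is used only in the easy converse direction, where the double cover of $(B^4,\Sigma)$ is always defined.
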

\begin{proof}
Since $P$ has two complementary legs, we know that there is a ribbon move that changes the link $\Ml_P$ into the disjoint union $K(p,q)\sqcup U$, where $U$ stands for the unknot and $K(p,q)$ is the $2$-bridge link associated to the lens space $L_{P'}$ in the statement of Lemma~\ref{l:gs+cl} \cite[Lemma~3.3 and discussion before it]{MontesinosYo}. 

If $Y_{P}$ is the boundary of a rational homology ball, then, by Proposition~\ref{p:cl+ball}, $L_{P'}$ is also the boundary of a rational homology ball. \cite[Theorem~1.2]{lisca} tells us then that $K(p,q)\subset S^3$ is the boundary of a ribbon surface $\Sigma'$ with $\chi(\Sigma')=1$. The statement follows immediately.

Conversely, if there exist a surface $\Sigma$ and a ribbon immersion $\Sigma\looparrowright S^{3}$, with $\partial\Sigma=\Ml_P$ and $\chi (\Sigma)=1$, then it is well known that $Y_P$ is the boundary of a rational homology ball \cite{Ka}.
\end{proof}

In this last proposition we have shown that any 3 legged slice Montesinos \emph{knot} with two complementary legs is actually the boundary of a ribbon disk. We have thus:

\begin{cor}\label{c:sr+cl}
The slice-ribbon conjecture holds true for all the Montesinos knots $\Ml_P$ with $P$ a 3 legged canonical graph with 2 complementary legs.
\end{cor}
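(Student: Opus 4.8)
The plan is to derive Corollary~\ref{c:sr+cl} directly from Proposition~\ref{p:famC} together with the classical fact relating rational homology balls to ribbon surfaces. The final statement asserts that the slice--ribbon conjecture holds for every Montesinos knot $\Ml_P$ arising from a 3 legged canonical graph $P$ with two complementary legs. Recall that the slice--ribbon conjecture predicts that a slice knot bounds a ribbon disk, so the content of the corollary is: if such a $\Ml_P$ is slice, then it bounds a ribbon disk.

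First I would fix a 3 legged canonical graph $P$ with two complementary legs and assume that the knot $\Ml_P$ is slice. The strategy is to run the chain of equivalences already assembled in the excerpt. Sliceness of $\Ml_P$ means it bounds a smooth disk $D\subset B^4$; since $Y_P$ is the double cover of $S^3$ branched over $\Ml_P$, taking the double cover of $B^4$ branched over $D$ produces a $4$--manifold bounded by $Y_P$ whose rational homology is that of a ball, by \cite{Ka}. Hence $Y_P$ bounds a rational homology ball. This is the step that converts the knot-theoretic hypothesis into the $3$--manifold hypothesis needed to invoke Proposition~\ref{p:famC}.

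Next I would apply Proposition~\ref{p:famC} in the forward direction: since $Y_P$ bounds a rational homology ball, the proposition guarantees that $\Ml_P$ is the boundary of a ribbon surface $\Sigma$ with $\chi(\Sigma)=1$. Because $\Ml_P$ is a \emph{knot} (a single component), a connected ribbon surface $\Sigma$ with $\chi(\Sigma)=1$ and one boundary component is necessarily a disk: a compact surface with $k$ boundary components and $g$ handles has $\chi=2-2g-k$, so with $k=1$ and $\chi=1$ we force $g=0$, i.e.\ $\Sigma$ is a ribbon disk. Thus $\Ml_P$ bounds a ribbon disk, which is exactly the conclusion of the slice--ribbon conjecture for this knot.

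The only subtlety, and the step I would treat most carefully, is the connectedness and component-count bookkeeping that upgrades ``ribbon surface with $\chi=1$'' to ``ribbon disk''. One must be sure that the surface $\Sigma$ produced by Proposition~\ref{p:famC} can be taken connected (a ribbon surface for a knot can be arranged to have no closed components, and any such closed or disconnected pieces can be absorbed or discarded without changing the boundary, at the cost of adjusting $\chi$), so that the Euler characteristic computation pins down the topological type. Granting this, the corollary follows with no further work, since the converse implication of Proposition~\ref{p:famC} already shows that such a ribbon disk forces sliceness, making the two notions coincide for the family at hand.
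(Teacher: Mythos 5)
Your proposal is correct and follows exactly the paper's route: the paper also deduces the corollary directly from Proposition~\ref{p:famC}, using the standard fact from \cite{Ka} that sliceness of $\Ml_P$ makes the double branched cover $Y_P$ bound a rational homology ball, and then noting that a ribbon surface with $\chi=1$ bounded by a knot must be a ribbon disk. Your extra care about connectedness and the Euler characteristic bookkeeping is a sound elaboration of the same argument, not a different approach.
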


\section{noteworthy remarks on lattice embeddings}\label{s:remarks}
\begin{itemize}
\item Similar families of 3 legged graphs have been studied in \cite{MontesinosYo}. In that article all graphs $\Gamma$ such that $\Gamma\subseteq\Z^{\vline \Gamma\vline}$ define Seifert spaces which bound  rational homology balls and the corresponding Montesinos links are the boundary of ribbon surfaces with Euler characteristic $1$. In other words, Donaldson obstruction is sufficient to yield a complete classification. The situation is completely analogous in the case of linear graphs (lens spaces) \cite{lisca}. In contrast, not all the sets studied in this article, that is 3 legged graphs $P$ with two complementary legs such that $P\subseteq\Z^{\vline P\vline}$, define Seifert spaces which bound rational homology balls. For example, the Seifert space associated to 
\begin{center}
\frag{10}{$-e_5+3e_6$}
\frag{2}{$-e_1+e_3$}
\frag{3}{$-e_3+e_4+e_5$}
\frag{a}{$e_5-e_4$}
\frag{e}{$e_1+e_2$}
\frag{d}{$e_1-e_2$}
\includegraphics[scale=0.7]{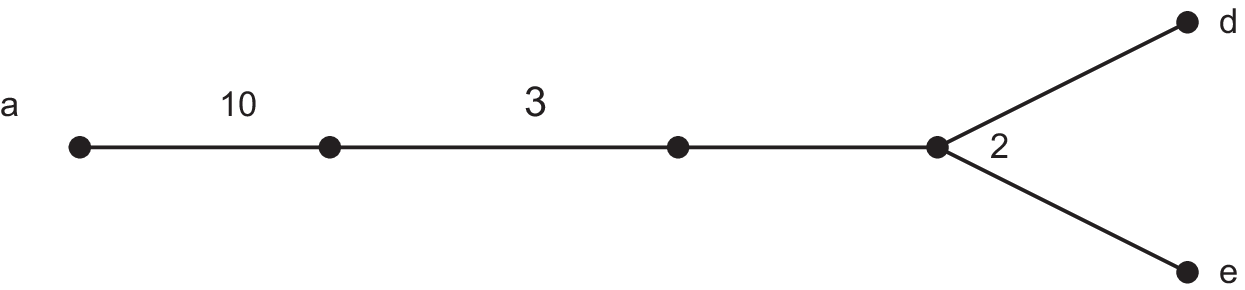}
\end{center}
is not the boundary of a rational homology ball, since the lens space $L(36,19)$ with associated string $(2,10,2)$ does not belong to \cite[Theorem~1.2]{lisca}.

\item In all the cases studied in \cite{MontesinosYo,lisca,GJ,pretzelYo} if a graph $\Gamma$ yields a 3--manifold $Y_{\Gamma}$ which bounds a rational homology ball, then all the coefficients appearing in the embedding $\Gamma\subseteq\Z^{\vline \Gamma\vline}$ can be chosen to be equal to $\pm 1$.  This is no longer true in the family studied in this note. For example,
\begin{center}
\frag{5}{$2e_4-e_3$}
\frag{2}{$e_3-e_1$}
\frag{e}{$e_1+e_2$}
\frag{d}{$e_1-e_2$}
\includegraphics[scale=0.7]{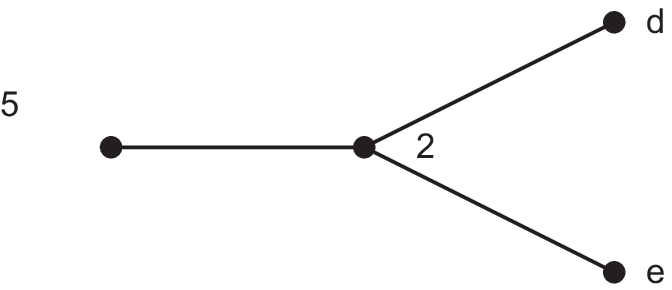}
\end{center}
satisfies $|v_{1,1}\cdot e_4|=2$ and it is the boundary of a rational homology ball, since the lens space $L(4,1)$ has this property. 

%\item By definition of $\wp$ every $\Gamma\in\wp$ satisfies $I(\Gamma)<-1$ and by Proposition~\ref{p:coef.} we know that if $\Gamma$ is a standard set then $I(\Gamma)\in\{-2,-3,-4\}$. In contrast, generalized standard sets $P$ of family $\catC$ have unbounded  $I(P)$. For example, for every $r\geq 0$, the generalized standard set $P_r\in\catC$:
%\begin{center}
%\frag[ss]{A}{$ $}
%\frag[ss]{5}{$-2e_{r+3}+e_{r+4}$}
%\frag[ss]{2}{$-e_1+e_3$}
%\frag[ss]{u}{$-e_3+e_4$}
%\frag[ss]{t}{$-e_{r+1}+e_{r+2}$}
%\frag[ss]{3}{$-e_{r+2}+e_{r+3}+e_{r+4}$}
%\frag[ss]{a}{$e_5-e_4$}
%\frag[ss]{e}{$e_1+e_2$}
%\frag[ss]{d}{$e_1-e_2$}
%\frag[ss]{r}{$r$}
%\includegraphics[scale=0.8]{fig/Inoacot.eps}
%\end{center}
%satisfies $I(P_r)=-r$.
\end{itemize}

\bibliographystyle{amsplain}
\bibliography{bibliomis}

\end{document}